\renewcommand{\baselinestretch}{1.0}
\renewcommand{\baselinestretch}{\baselinestretch}
\renewcommand{\baselinestretch}{1.2}
\newtheorem{theorem}{Theorem}
\newtheorem{lemma}[theorem]{Lemma}
\theoremstyle{definition}
\theoremstyle{remark}
\numberwithin{equation}{section}
\begin{document}


\title[Isometric deformations of the ${\mathcal{K}}^{\frac{1}{4}}$-flow translators in ${\mathbb{R}}^3$ with helicoidal symmetry]
 {Isometric deformations of the ${\mathcal{K}}^{\frac{1}{4}}$-flow translators in ${\mathbb{R}}^3$ with helicoidal symmetry}
\author{Hojoo Lee}
\address{Department of Geometry and Topology, University of Granada, Granada, Spain.}
\email{ultrametric@gmail.com}
 \thanks{This work was supported by the National Research Foundation of Korea Grant funded
 by the Korean Government (Ministry of Education, Science and Technology) [NRF-2011-357-C00007].}

\maketitle
\begin{abstract}
 The height functions of ${\mathcal{K}}^{\frac{1}{4}}$-flow translators in  Euclidean space ${\mathbb{R}}^3$ solve
 the classical Monge-Amp\`{e}re equation $f_{xx} f_{yy} - {f_{xy}}^2 = 1$. We explicitly and geometrically determine the
 moduli space of all helicoidal ${\mathcal{K}}^{\frac{1}{4}}$-flow translators, which are generated from planar curves
 by the action of helicoidal groups.
\end{abstract}

\section{Motivation and main results}

\subsection{Introduction}
 The classical curve-shortening flow adimts fruitful generalizations with intriguing
 applications. One of Huisken's theorems guarantees that an analogue of the Gage-Hamilon's
 shrinking-curves theorem in the plane also holds for the mean curvature flow
 in higher dimensional Euclidean spaces.

 Andrews \cite{An99} proved Firey's conjecture that convex surfaces evolving
 by the Gauss curvature flow become spherical.  Chow \cite{Ch85} investigated the normal deformation by powers of the Gauss curvature, and
 Urbas \cite{Ur98} studied self-similar and translating solitons for the normal evolution by positive powers of the Gauss curvature.

 We say that a surface $\Sigma$ is a \textit{${\mathcal{K}}^{\frac{1}{4}}$-translator} when we have the geometric condition
\begin{equation*}
     {\mathcal{K}}_{{}_{\Sigma}}  =  \cos^{4}\left({\theta}_{{}_{\Sigma}}\right).
\end{equation*}
 The scalar function ${\mathcal{K}}_{{}_{\Sigma}}$ denotes the Gaussian curvature and the third component $\cos  \left({\theta}_{{}_{\Sigma}}\right) = {\mathbf{n}}_{{}_{\Sigma}} \cdot {(0,0,1)}$ of the unit
 normal ${\mathbf{n}}_{{}_{\Sigma}}$ is called the angle function on $\Sigma$.

  The ${\mathcal{K}}^{\frac{1}{4}}$-translators in Euclidean space ${\mathbb{R}}^3$ are of significant geometrical
  interest. The convex graph $z=f(x,y)$ becomes a ${\mathcal{K}}^{\frac{1}{4}}$-translator if and only if
  its height function $f$ solves the classical Monge-Amp\`{e}re equation
\begin{equation*}
  f_{xx} f_{yy} - {f_{xy}}^2 = 1.
\end{equation*}

 J\"{o}rgens' outstanding holomorphic resolution \cite{Jo54} says that, when $f_{xx} f_{yy} - {f_{xy}}^2 = 1$,
 the gradient graph $\left(x,y,f_{x},f_{y}\right)$ becomes a minimal surface in Euclidean space  ${\mathbb{R}}^4$. The
 Hessian one equation is a special case of special Lagrangian equations \cite{HL82}, \textit{split} special Lagrangian equations \cite{HL10, Le12, Me91},
 and affine mean curvature equations \cite{An96, Ca82, TW05}. Furthermore, its
 solutions induce flat surfaces in  hyperbolic space ${\mathbb{H}}^{3}$ \cite{Sp79}.

 \subsection{Isometric deformations of helicoidal ${\mathcal{K}}^{\frac{1}{4}}$-translators}

\begin{theorem}[\textbf{Moduli space of ${\mathcal{K}}^{\frac{1}{4}}$-translators with rotational
\& helicoidal symmetry}] \label{moduli0}
 \textbf{(A)}  Any helicoidal ${\mathcal{K}}^{\frac{1}{4}}$-translator $\Sigma$ of pitch $\mu$ admits
 a one-parameter family of isometric helicoidal ${\mathcal{K}}^{\frac{1}{4}}$-translators ${\Sigma}^{h}$
 with pitch $h$ such that $\Sigma={\Sigma}^{\mu}$ and that ${\Sigma}^{0}$ is rotational. \\
 \textbf{(B)} The cylinder over a circle in the $xy$-plane is a
 rotational ${\mathcal{K}}^{\frac{1}{4}}$-translator. Additionally, there exists a one-parameter family of
 ${\mathcal{K}}^{\frac{1}{4}}$-translators ${\mathcal{H}}_{c}$ invariant under the rotation with
 $z$-axis. The profile curve of rotational surface ${\mathcal{H}}_{c}$ is
 congruent to the graph $( \; U,  \; 0,  \; {\Lambda}_{c}(U)  \;)$, where the one-parameter family of
 height functions ${\Lambda}_{c}(U)$ is explicitly given by
  \begin{equation*}
  {\Lambda}_{c}(U) =
\begin{cases}
   \frac{1}{2} \left[ \, U \sqrt{U^2 + {\kappa}^{2}\,}  + {\kappa}^{2} \, \mathrm{arcsinh} \,
 \left( \frac{U}{{\kappa}}  \right)  \; \right],  \quad \quad U>0 \; (\text{when}\; c=1 + {\kappa}^{2}, \; \kappa>0),  \\
   \frac{1}{2} \; U^2, \quad \quad \quad \quad  \quad \quad  \quad \quad \quad \quad  \quad \quad \quad \quad
   \quad U \geq 0 \; (\text{when}\; c=1),  \\
   \frac{1}{2} \left[ \,
 U \sqrt{U^2 - {\kappa}^{2}\,} -  {\kappa}^{2} \, \mathrm{arccosh} \,  \left( \frac{U}{{\kappa}}  \right)
  \; \right],   \quad \quad U > \kappa \; (\text{when}\; c=1 - {\kappa}^{2}, \; \kappa>0).
\end{cases}
\end{equation*}
 \textbf{(C)} There exists a two-parameter family of helicoidal ${\mathcal{K}}^{\frac{1}{4}}$-translators
 ${\mathcal{H}}^{h}_{c}$ and the geometric coordinates  $(U,t)$ on ${\mathcal{H}}^{h}_{c}$ satisfying the
 following conditions.
 \begin{enumerate}
  \item[\textbf{(C1)}] The geometric meaning of the parameter $h$ is that the surface  ${\mathcal{H}}^{h}_{c}$ is
  invariant under the helicoidal motion with pitch $h$. The surface ${\mathcal{H}}^{h}_{c}$ is invariant under the one-parameter subgroup
  $\{ {\mathbf{S}}_{T} \}$ of the group of rigid motions of ${\mathbb{R}}^{3}={\mathbb{C}} \times {\mathbb{R}}$
  given by
\begin{equation*}
   \left( \, \zeta, \, z  \, \right) \in {\mathbb{C}} \times {\mathbb{R}} \; \mapsto
   \; {\mathbf{S}}_{T} \left(  \, \zeta, \, z  \, \right) =  \left( \, e^{iT} \zeta,  \, hT + z  \, \right) \in {\mathbb{C}} \times {\mathbb{R}}.
\end{equation*}
 \item[\textbf{(C2)}]  There exist the coordinates $(U,t)$ on the helicoidal surface ${\mathcal{H}}^{h}_{c}$ such that
 its metric reads $I_{{\mathcal{H}}^{h}_{c}} = \left(U^2 + c\right) dU^2 +U^2 dt^2$.
 \item[\textbf{(C3)}] The geometric meaning of the parameter $c$ is the property that the
 helicoidal surface  ${\mathcal{H}}^{h}_{c}$ is isometric to the rotational surface
 ${\mathcal{H}}^{0}_{c}={\mathcal{H}}_{c}\,$.
 \item[\textbf{(C4)}] The geometric meaning of the coordinate $U$ is the property that the function $\frac{1}{\sqrt{U^2 +c}}$ coincides with the angle
 function on the surface ${\mathcal{H}}^{h}_{c}$ up to a sign.
\end{enumerate}
\end{theorem}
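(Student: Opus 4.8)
The plan is to use the helicoidal symmetry to collapse the translator equation $\mathcal{K}_\Sigma = \cos^4(\theta_\Sigma)$ to a single ordinary differential equation, and then to read off \textbf{(A)}--\textbf{(C4)} from one first integral. I would parametrize a pitch-$h$ helicoidal surface as the orbit $X(U,t) = \mathbf{S}_t(\gamma(U))$ of a profile $\gamma(U) = (a\cos\phi,\, a\sin\phi,\, b)$, with $a,b,\phi$ functions of $U$ and $' = d/dU$, under the screw group of \textbf{(C1)}. A direct computation gives the first fundamental form $E = a'^2 + a^2\phi'^2 + b'^2$, $F = a^2\phi' + hb'$, $G = a^2 + h^2$; choosing the angular gauge $\phi' = -hb'/a^2$ annihilates the cross term $F$, so the induced metric becomes the diagonal $E\,dU^2 + G\,dt^2$.

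The decisive move is to take as radial coordinate $U := \sqrt{a^2+h^2}$, so that $G = U^2$ and $aa' = U$. Two facts then fall out. First, the third component of $X_U\times X_t$ equals $aa' = U$ while its length is $\sqrt{EG-F^2} = \sqrt{E}\,U$, so the angle function is simply $\cos\theta = 1/\sqrt{E}$, independently of $b$ and $h$. Second, since the diagonal metric $E\,dU^2 + U^2\,dt^2$ depends on $U$ alone, Theorema Egregium gives the clean formula $\mathcal{K} = \dfrac{E'}{2U E^2}$. Feeding both into the translator condition makes it collapse to $\dfrac{E'}{2UE^2} = \dfrac{1}{E^2}$, that is $E' = 2U$, whose first integral is $E = U^2 + c$. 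This one identity simultaneously yields \textbf{(C2)} (the metric is $(U^2+c)\,dU^2 + U^2\,dt^2$), \textbf{(C4)} ($\cos\theta = 1/\sqrt{U^2+c}$ up to sign), and the translator property; and, tellingly, it carries no trace of the pitch $h$.

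The remaining assertions are then formal. The canonical metric $(U^2+c)\,dU^2 + U^2\,dt^2$ is $h$-free, so for fixed $c$ the identity in $(U,t)$ is an isometry from $\mathcal{H}^h_c$ onto $\mathcal{H}^0_c = \mathcal{H}_c$, which is \textbf{(C3)}; existence of $\mathcal{H}^h_c$ amounts to recovering the height $b$ from $a = \sqrt{U^2-h^2}$ and $E = U^2+c$, i.e. solving $b'^2 = \dfrac{(U^2+c)(U^2-h^2)}{U^2} - 1 \ge 0$ by an elementary quadrature on the range where the radicand is nonnegative. For \textbf{(A)}, the reduction shows any pitch-$\mu$ helicoidal translator is some $\mathcal{H}^\mu_c$ (the vertical cylinder being the exceptional flat case, invariant under every $\mathbf{S}_T$), whereupon $\{\mathcal{H}^h_c\}_h$ is the sought isometric deformation with $\mathcal{H}^\mu_c = \Sigma$ and $\mathcal{H}^0_c$ rotational. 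For \textbf{(B)} I set $h=0$, where $U=a$ is the radius and $E = 1+(\Lambda_c')^2$, so $E = U^2+c$ reads $(\Lambda_c')^2 = U^2 + (c-1)$; the three cases $c-1 = \kappa^2,\,0,\,-\kappa^2$ integrate by the standard formulas for $\int\sqrt{U^2\pm\kappa^2}\,dU$ to the stated $\Lambda_c$, while the cylinder ($a$ constant, excluded from the $U$-chart) is the flat translator with $\mathcal{K} = 0 = \cos\theta$.

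The hard part will be the very first step: finding the chart in which the problem trivializes. A frontal computation of $\mathcal{K}$ for a generic helicoidal surface is unpleasant, and it is not obvious a priori that one can diagonalize the metric and at the same time normalize it so that $G = U^2$ and $\cos\theta = 1/\sqrt{E}$ hold together. The whole proof turns on the observation that $U = \sqrt{a^2+h^2}$ forces both the intrinsic curvature (via Theorema Egregium) and the extrinsic angle function to depend only on the single function $E(U)$, after which the translator equation degenerates to $E' = 2U$. A secondary but genuine issue is qualitative control: tracking the admissible interval of $U$ and the positivity of the radicand defining $b$, so that each $\mathcal{H}^h_c$ is a bona fide immersed surface and the three profiles in \textbf{(B)} are correctly delimited.
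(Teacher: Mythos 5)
Your proposal is correct and follows essentially the same route as the paper: your chart $U=\sqrt{a^2+h^2}$ with the angular gauge $\phi'=-hb'/a^2$ is exactly the Bour coordinate system of Lemma \ref{B00} (there reached via the arc-length parameter $s$, with $K=-U_{ss}/U$ and $n_3=dU/ds$ in place of your equivalent $K=E'/(2UE^2)$ and $\cos\theta=1/\sqrt{E}$), and both arguments hinge on the same observation that metric and angle function depend only on $E(U)$ and not on the pitch, so the translator equation collapses to the pitch-free first integral $E=U^2+c$. Your quadrature $b'^2=\frac{(U^2+c)(U^2-h^2)}{U^2}-1$ agrees with the paper's relation \eqref{data3} after the substitution $b=\Lambda-h\Theta$, and the three profiles in \textbf{(B)} are integrated identically.
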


 The statement \textbf{(A)} in Theorem \ref{moduli0} is inspired by the 1982 do Carmo-Dajczer theorem \cite{CD82} that a surface of non-zero constant mean
 curvature is helicoidal if and only if it lies in the  associate family \cite{La70} of a Delaunay's
 rotational surface \cite{Ee87, Ko00}  with the same constant mean curvature. In 1998, Haak \cite{Ha98} presented an alternative
 proof of the do Carmo-Dajczer theorem.

 The \textit{mean curvature flow} in ${\mathbb{R}}^3$ also admits the translating solitons with
 helicoidal symmetry. In 1994, Altschuler and Wu \cite{AW94} showed the existence of the convex, rotational,
 entire graphical translator. In 2007, Clutterbuck, Schn\"{u}rer and Schulze \cite{CSS07} constructed the bigraphical translator,
 which is also rotationally symmetric.

  \textbf{Open Problem.}  Prove or disprove that Halldorsson's helicoidal translators \cite{Ha11} for the mean curvature
 flow admit the isometric deformation from rotational translators.

\bigskip

 \textbf{Acknowledgement.}  I would like to thank Miyuki Koiso for sending me the paper \cite{Ko00}
 and appreciate discussions with Matthias Weber.

\bigskip

\section{Proof of Theorem \ref{moduli0}}   \label{Sheli}

   We first need to revisit Bour's construction \cite{CD82} with details to specify the behavior
 of the angle function on his isometric helicoidal surfaces.

\begin{lemma}[\textbf{Angle function on Bour's helicoidal surfaces}] \label{B00}
 Let $\Sigma$ be a helicoidal surface with pitch vector
 $\mu \mathbf{k}=(0,0,\mu)$ and the generating curve $\gamma=({\mathcal{R}}, 0, \Lambda)$ in
 the $xz$-plane, which admits the parametrization
$  \left(u, \theta\right) \mapsto   \left( \, {\mathcal{R}} \cos
\theta, \, {\mathcal{R}} \sin \theta, \, \Lambda  + \mu \theta  \,
\right)$,
 where $u$ denotes a parameter of the generating curve  $\gamma$. We then define the Bour coordinate transformation
\begin{equation*}
   \left(u, \theta\right) \mapsto   \left( s, t \right) =  \left( s, \theta + \Theta \right),
\end{equation*}
via the relations
\begin{equation*} \label{data1}
\begin{cases}
   ds^2 = d{\mathcal{R}}^2 + \frac{{\mathcal{R}}^2}{{\mathcal{R}}^2 + {\mu}^2 } d\Lambda^{2}, \\
   d\Theta =  \frac{\mu}{{\mathcal{R}}^2 + {\mu}^2 } d{\Lambda},
\end{cases}
\end{equation*}
 and also introduce the Bour function $U$ using the relation $U^2 =R^2 +  {\mu}^2$.  \\
 \textrm{\textbf{(A)}} The  helicoidal surface $\Sigma$ admits the reparametrization satisfying  \textrm{\textbf{(A1)}}, \textrm{\textbf{(A2)}}, and
 \textrm{\textbf{(A3)}}:
\begin{equation*}
   \left( s, t \right) \mapsto  \mathbf{X} \left( s, t \right) = \left(\, {\mathcal{R}} \cos  \left( t - \Theta \right), \, {\mathcal{R}}  \left( t - \Theta \right), \,
   \Lambda + \mu  \left( t - \Theta \right) \, \right).
\end{equation*}
 \textbf{(A1)} Its first fundamental form reads $I_{\Sigma} = ds^2 + U^2 dt^2$. \\
 \textbf{(A2)} The parameters $R$, $\Lambda$, and $\Theta$ can be recovered from the Bour function $U$ explicitly:
\begin{equation*}
\begin{cases}
   {\mathcal{R}}^2 = U^2 -  {\mu}^2, \\
    d\Lambda ^2 = \frac{ U ^2 }{\left(  U^2 -  {\mu}^2  \right)^2}
    \left(  U^2 \, \left( 1 - \left(  \frac{ dU   }{ ds } \right)^2  \right) - h^2
    \right) \, ds^2, \\
    d  \Theta = \frac{\mu}{U^2  } d\Lambda.
\end{cases}
\end{equation*}
 \textbf{(A3)} The angle function $n_3$ defined as the third component  $\mathbf{n} \cdot \mathbf{k}$ of the induced unit
 normal $\mathbf{n}=\frac{1}{ \Vert {\mathbf{X}}_{s} \times {\mathbf{X}}_{t} \Vert} {\mathbf{X}}_{s} \times
 {\mathbf{X}}_{t}$ is also determined by the Bour function $U$.
 \begin{equation*}
 {n_3}^2 =  \left(  \frac{dU}{ds} \right)^{2}.
\end{equation*}
 \textbf{(B)} We construct a two-parameter family of
 helicoidal surfaces ${\Sigma}^{\lambda, h}$ of pitch $h$ by the
 patch
\begin{equation*}
   {\mathbf{X}}^{\lambda, h} \left( s, t \right)
   = \left(\, {{\mathcal{R}}}^{\lambda, h} \cos  \left( \frac{t}{\lambda} - {\Theta}^{\lambda, h} \right),
   \, {{\mathcal{R}}}^{\lambda, h} \left(
   \frac{t}{\lambda} -  {\Theta}^{\lambda, h}  \right), \, {\Lambda}^{\lambda, h}  + h \left( \frac{t}{\lambda} -
   {\Theta}^{\lambda, h}  \right) \, \right),
\end{equation*}
where the geometric datum $\left({{\mathcal{R}}}^{\lambda, h},
{\Lambda}^{\lambda, h}, {\Theta}^{\lambda, h}  \right)$ is
explicitly determined by the pair $(\lambda, h)$ of constants  and
the Bour function $U(s)$ arising from the reparametrization
$\mathbf{X} \left( s, t \right)$ of  $\Sigma$
\begin{equation} \label{data2}
\begin{cases}
   {\left( {{\mathcal{R}}}^{\lambda, h}\right) }^2 = {\lambda}^2 U^2 -  {h}^2, \\
    {\left( d{\Lambda}^{\lambda, h} \right)}^2
    = \frac{ {\lambda}^2 U ^2 }{\left( {\lambda}^2 U^2 -  {h}^2  \right)^2}
    \left( {\lambda}^2 U^2 \, \left( 1 - {\lambda}^2 \left(  \frac{ dU   }{ ds } \right)^2   \right) - h^2
    \right) \, ds^2, \\
    {d {\Theta}^{\lambda, h}} = \frac{h}{{\lambda}^2 U^2  } d{\Lambda}^{\lambda, h}.
\end{cases}
\end{equation}
 Then, the helicoidal surface ${\Sigma}^{\lambda, h}$ is isometric to the initial surface ${\Sigma}$, and
 its angle function ${n}^{\lambda, h}_{3}={\mathbf{n}}^{\lambda, h} \cdot \mathbf{k}$ is
 determined by the Bour function $U$ of the initial surface $\Sigma$.
 \begin{equation*}
 {\left({n}^{\lambda, h}_{3}\right)}^2 =   {\lambda}^2 \left(  \frac{dU}{ds}
 \right)^2.
\end{equation*}
 \textbf{(C)} Furthermore, the helicoidal surface ${\Sigma}^{1, \mu}$ coincides with the initial surface ${\Sigma}$.
\end{lemma}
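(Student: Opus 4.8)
The plan is to reduce the entire lemma to a single first-fundamental-form computation and then read off every geometric quantity from it. First I would write the helicoidal patch in the raw coordinates $(u,\theta)$, namely $\Phi(u,\theta)=(\mathcal{R}\cos\theta,\mathcal{R}\sin\theta,\Lambda+\mu\theta)$, and compute its coefficients $E=(\mathcal{R}')^2+(\Lambda')^2$, $F=\mu\Lambda'$, $G=\mathcal{R}^2+\mu^2$. The whole point of the substitution $t=\theta+\Theta$ with $d\Theta=\frac{\mu}{\mathcal{R}^2+\mu^2}\,d\Lambda$ is that the off-diagonal coefficient becomes $F-G\,\Theta'=0$, so that the metric collapses to $(E-F^2/G)\,du^2+G\,dt^2$. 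A short simplification identifies $E-F^2/G$ with $(\mathcal{R}')^2+\frac{\mathcal{R}^2}{\mathcal{R}^2+\mu^2}(\Lambda')^2=(ds/du)^2$ and $G$ with $U^2$, which is exactly \textbf{(A1)}. For \textbf{(A2)} I would differentiate $U^2=\mathcal{R}^2+\mu^2$ to obtain $U\,dU=\mathcal{R}\,d\mathcal{R}$, substitute $d\mathcal{R}^2=\frac{U^2}{\mathcal{R}^2}\,dU^2$ into $ds^2=d\mathcal{R}^2+\frac{\mathcal{R}^2}{\mathcal{R}^2+\mu^2}\,d\Lambda^2$, and solve for $d\Lambda^2$; the stated formula then follows after writing $\mathcal{R}^2=U^2-\mu^2$, while $d\Theta=\frac{\mu}{U^2}\,d\Lambda$ is immediate from $\mathcal{R}^2+\mu^2=U^2$.

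For \textbf{(A3)} I would compute the third component of $\Phi_u\times\Phi_\theta$, which equals $\mathcal{R}\mathcal{R}'$, and divide by $\Vert\Phi_u\times\Phi_\theta\Vert=\sqrt{EG-F^2}=\sqrt{(\mathcal{R}')^2(\mathcal{R}^2+\mu^2)+\mathcal{R}^2(\Lambda')^2}$. This gives $n_3$ explicitly; the remaining task is to recognize the resulting expression as $(dU/ds)^2$. Using $\frac{dU}{ds}=\frac{\mathcal{R}}{U}\frac{d\mathcal{R}}{ds}$ together with $\frac{d\mathcal{R}}{ds}=\mathcal{R}'/(ds/du)$ and the formula for $ds/du$ above, the expression for $(dU/ds)^2$ matches $n_3^2$ after clearing the common factor $U^2=\mathcal{R}^2+\mu^2$. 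This identity is the geometric heart of part (A): the angle function is governed entirely by the Bour function $U$.

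For \textbf{(B)} the cleanest route is to avoid a second independent computation and instead observe that $\Sigma^{\lambda,h}$ is, by construction, a helicoidal surface of pitch $h$ generated by the curve $(\mathcal{R}^{\lambda,h},0,\Lambda^{\lambda,h})$ with rotation angle $\frac{t}{\lambda}-\Theta^{\lambda,h}$. Hence part (A) applies verbatim with $\mu$ replaced by $h$: its Bour function is some $\tilde U$ with $\tilde U^2=(\mathcal{R}^{\lambda,h})^2+h^2$, and by the prescribed relation $(\mathcal{R}^{\lambda,h})^2=\lambda^2U^2-h^2$ we obtain $\tilde U^2=\lambda^2U^2$, i.e. $\tilde U=\lambda U$. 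I would then check that the Bour arclength of $\Sigma^{\lambda,h}$ is again $s$ and that its angular coordinate is $t/\lambda$ (the prescribed $d\Theta^{\lambda,h}=\frac{h}{\lambda^2U^2}\,d\Lambda^{\lambda,h}$ is exactly the Bour angular relation for this surface), so that its metric reads $d\tilde s^2+\tilde U^2\,(dt/\lambda)^2=ds^2+U^2\,dt^2=I_\Sigma$, proving the isometry. The angle-function statement then follows instantly from \textbf{(A3)}, since $(n^{\lambda,h}_3)^2=(d\tilde U/d\tilde s)^2=\lambda^2(dU/ds)^2$.

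The hard part will be the bookkeeping behind $d\tilde s^2=ds^2$: substituting $(\mathcal{R}^{\lambda,h})^2=\lambda^2U^2-h^2$, $d(\mathcal{R}^{\lambda,h})^2=\frac{\lambda^4U^2}{\lambda^2U^2-h^2}\,dU^2$, and the prescribed $(d\Lambda^{\lambda,h})^2$ into $d\tilde s^2=d(\mathcal{R}^{\lambda,h})^2+\frac{(\mathcal{R}^{\lambda,h})^2}{(\mathcal{R}^{\lambda,h})^2+h^2}\,(d\Lambda^{\lambda,h})^2$, the numerator collapses, the two terms proportional to $\lambda^4U^2(dU/ds)^2$ cancelling to leave exactly $\lambda^2U^2-h^2$, which is why $d\tilde s^2=ds^2$ holds identically. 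This cancellation is precisely what forces the elaborate-looking form of $(d\Lambda^{\lambda,h})^2$; once it is in hand, \textbf{(C)} is the trivial specialization $\lambda=1$, $h=\mu$, where the data reproduce those of \textbf{(A2)}, so that $\mathbf{X}^{1,\mu}=\mathbf{X}$ and therefore $\Sigma^{1,\mu}=\Sigma$.
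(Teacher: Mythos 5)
Your proof is correct, and while part \textbf{(A)} follows essentially the same computation as the paper (the paper completes the square in the $(u,\theta)$-metric, which is the same orthogonalization you describe via $F-G\,\Theta'=0$, and it likewise reads $n_3$ off the third component $\mathcal{R}\dot{\mathcal{R}}$ of the cross product), your treatment of part \textbf{(B)} takes a genuinely different and leaner route. The paper proves \textbf{(B)} by brute force: it recomputes the coefficients $E^{\lambda,h}$, $F^{\lambda,h}$, $G^{\lambda,h}$ of $I_{\Sigma^{\lambda,h}}$ in the $(s,t)$-chart directly from the patch $\mathbf{X}^{\lambda,h}$, verifies $E^{\lambda,h}=1$, $F^{\lambda,h}=0$, $G^{\lambda,h}=U^2$, and then computes the cross product ${\mathbf{X}^{\lambda,h}}_s\times{\mathbf{X}^{\lambda,h}}_t$ a second time to get the angle function. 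You instead observe that $\Sigma^{\lambda,h}$ is itself a helicoidal surface of pitch $h$ to which part \textbf{(A)} applies verbatim, identify its Bour function as $\tilde U=\lambda U$ from $(\mathcal{R}^{\lambda,h})^2+h^2=\lambda^2U^2$, note that the prescribed $d\Theta^{\lambda,h}$ is exactly the Bour angular relation so the new angular coordinate is $t/\lambda$, and reduce everything to the single identity $d\tilde s^2=ds^2$ — whose cancellation you carry out correctly, the $\lambda^4U^2(dU/ds)^2$ terms dropping out. This buys you the isometry and the angle-function formula $(n_3^{\lambda,h})^2=(d\tilde U/d\tilde s)^2=\lambda^2(dU/ds)^2$ in one stroke and makes transparent \emph{why} the elaborate form of $(d\Lambda^{\lambda,h})^2$ is forced; the paper's direct computation is more self-contained but obscures this structure. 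Both routes handle \textbf{(C)} identically as the trivial specialization $(\lambda,h)=(1,\mu)$. One cosmetic remark: your derivation of \textbf{(A2)} naturally produces $-\mu^2$ where the statement prints $-h^2$; that is a typo in the paper (the constant $h$ has not yet been introduced in part \textbf{(A)}), and your version is the intended one.
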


\begin{proof}
 \textbf{(A)} The definitions of the Bour coordinate $(s,t)$ and the Bour function $U$ yield
 \begin{eqnarray*}
  I_{\Sigma} &=& \left(   dR^2 +  {d{\Lambda}}^2 \right) + 2 \mu  d{\Lambda} d\theta +
   \left(   {\mathcal{R}}^2 +  {\mu}^2 \right) d  {\theta}^2 \\
  &=& \left(   d{\mathcal{R}}^2 +  \frac{{\mathcal{R}}^2}{{\mathcal{R}}^2 + {\mu}^2 } {d{\Lambda}}^2 \right)
  +  \left(   {\mathcal{R}}^2 +  {\mu}^2 \right) \left( d  {\theta} +   \frac{\mu}{{\mathcal{R}}^2 + {\mu}^2 } d\Lambda  \right)^2  \\
  &=&  ds^2 + U^2 dt^2 .
 \end{eqnarray*}
 Noticing that the definition $U^2 =R^2 +  {\mu}^2$ implies $d{\mathcal{R}}^2 = \frac{U^2}{U^2 -
 {\mu}^{2}} dU^2$, we can recover the function $\dot{\Lambda}=\frac{ d\Lambda}{ ds }$ from the Bour function $U(s)$ explicitly:
 \begin{equation*}
 ds^2 = d{\mathcal{R}}^2 + \frac{{\mathcal{R}}^2}{{\mathcal{R}}^2 + {\mu}^2 } d\Lambda^2
 = \frac{U^2}{U^2 - {\mu}^{2}} dU^2 + \frac{U^2 - {\mu}^{2}}{U^2} d\Lambda^{2},
\end{equation*}
 and
 \begin{equation*}
 d\Lambda^2 = \frac{U^2}{U^2 - {\mu}^{2}} \left( ds^2 -\frac{U^2}{U^2 - {\mu}^{2}}  dU^2 \right)
 = \frac{ U ^2 }{\left( U^2 -  {\mu}^2  \right)^2} \left(  U^2 \, \left( 1 - \left( \frac{
 dU}{ ds } \right)^2  \right)  - h^2  \right) \, ds^{2}.
\end{equation*}
Adopting the symbol $\,\dot{}=\frac{d}{ds}$ again, we obtain
\begin{equation*}
  {\mathbf{X}}_{s} \times {\mathbf{X}}_{t} =  \left(  \, \mu \dot{\mathcal{R}}  \sin  \theta -\mathcal{R} \dot{\Lambda}   \cos
 \theta, \,  - \mu \dot{\mathcal{R}}  \cos  \theta - \mathcal{R} \dot{\Lambda}   \sin  \theta, \,\mathcal{R} \dot{\mathcal{R}} \,
 \right).
\end{equation*}
After setting $I_{\Sigma} := E ds^2 +2F ds dt + G dt^2  = ds^2 +
U^2 dt^2$, we immediately see that
 \begin{equation*}
  { \Vert {\mathbf{X}}_{s} \times {\mathbf{X}}_{t} \Vert}^2 = E G - F^2= U^{2}.
 \end{equation*}
It thus follows that
\begin{equation*}
 {n_3}^2 = \frac{{ \left( \mathcal{R} \dot{\mathcal{R}} \right) }^2}{U^2} =  { \dot{U} }^2 = \left(
 \frac{dU}{ds} \right)^{2}.
\end{equation*}
\textbf{(B)} We first show that the surface ${\Sigma}^{\lambda,
h}$ is isometric to the initial
 surface ${\Sigma}$. Let us write
 \begin{equation*}
 I_{{\Sigma}^{\lambda, h}} = \, E^{\lambda, h} ds^2
 +  \, 2 F^{\lambda, h} ds dt +  \, G^{\lambda, h} dt^2 .
 \end{equation*}
 Adopting the symbol $\,\dot{}=\frac{d}{ds}$ and using (\ref{data2}), we have
 \begin{eqnarray*}
   E^{\lambda, h}
  &=&  {\left( {\dot{\mathcal{R}}}^{\lambda, h}\right)   }^2 + {\mathcal{R}}^2 {\left(  {\dot{\Theta}}^{\lambda, h}\right)}^2
   + {\left(  {\dot{\Lambda}}^{\lambda, h}   - h {\dot{\Theta}}^{\lambda, h} \right)}^{2}  \\
  &=&  {\left( {\dot{\mathcal{R}}}^{\lambda, h}\right)   }^2 +
  \frac{  {\lambda}^2 U^2 - h^2 }{  {\lambda}^2 U^2  }   {\left(  {\dot{\Theta}}^{\lambda, h}\right)}^2   \\
  &=&   \frac{  {\lambda}^2 U^2 { \dot{U}  }^2 }{  {\lambda}^2 U^2 - h^2 }
  +  \frac{  {\lambda}^2 U^2 - h^2 }{  {\lambda}^2 U^2  }  \cdot \frac{  {\lambda}^2 U^2
  \left[ {\lambda}^2 U^2 \, \left( 1 - {\lambda}^2  { \dot{U}  }^2 \right) - h^2
   \right] }{  {\left(   {\lambda}^2 U^2 - h^2\right)   }^2 }      \\
  &=& 1.
 \end{eqnarray*}
 We also deduce
 \begin{equation*}
   F^{\lambda, h} = - \frac{1}{\lambda} \left[ \, \left( {\left( {\mathcal{R}}^{\lambda, h} \right)}^{2} +
   h^2 \right) {\dot{\Theta}} - h {\dot{\Lambda}} \right] =
    - \frac{1}{\lambda} \left[ \,  {\lambda}^2 U^2 {\dot{\Theta}} - h {\dot{\Lambda}}   \right] = 0,
 \end{equation*}
and
 \begin{equation*}
    G^{\lambda, h} =  \frac{1}{{\lambda}^{2}} \left[ {\left( {\mathcal{R}}^{\lambda,
   h}\right)}^2 + h^2 \right]= U^2 .
 \end{equation*}
 Combining these, we  meet
    \begin{equation*}
I_{{\Sigma}^{\lambda, h}}=  \, E^{\lambda, h} ds^2
 +  \, 2 F^{\lambda, h} ds dt +  \, G^{\lambda, h} dt^2 = ds^2 + U^2 dt^2 =
 I_{\Sigma}.
 \end{equation*}
 Now, it remains to determine the angle function of the surface ${\Sigma}^{\lambda, h}$.
 Adopting the new variable $\theta= \frac{t}{\lambda} - {\Theta}^{\lambda, h}$ for simplicity, we
 write
\begin{equation*}
  {{\mathbf{X}}^{\lambda, h}}_{s} \times {{\mathbf{X}}^{\lambda, h}}_{t} =
  \frac{1}{\lambda} \left(  \, h {\dot{\mathcal{R}}}^{\lambda, h}  \sin  \theta - {\mathcal{R}}^{\lambda, h} \dot{\Lambda}   \cos
 \theta, \,  - h {\dot{\mathcal{R}}}^{\lambda, h}  \cos  \theta - {\mathcal{R}}^{\lambda, h} \dot{\Lambda}   \sin  \theta, \,
 {\mathcal{R}}^{\lambda, h} {\dot{\mathcal{R}}}^{\lambda, h} \, \right).
\end{equation*}
 Taking account into this and the equality
\begin{equation*}
 { \Vert {{\mathbf{X}}^{\lambda, h}}_{s} \times {{\mathbf{X}}^{\lambda, h}}_{t}
 \Vert}^2 =  E^{\lambda, h}  G^{\lambda, h} - {\left(   F^{\lambda, h} \right)}^2 = U^{2},
\end{equation*}
  we meet
 \begin{equation*}
 {\left({n}^{\lambda, h}_{3}\right)}^2 =  {\left( {\mathbf{n}}^{\lambda, h} \cdot \mathbf{k} \right)}^2 =
   \frac{1}{U^2} \cdot \frac{  {\left(  {\mathcal{R}}^{\lambda, h} \right)}^2
  {\left(  {\dot{\mathcal{R}}}^{\lambda, h} \right)}^2 }{ {\lambda}^2}
  = {\lambda}^2  { \dot{U}  }^2  = {\lambda}^2 \left(  \frac{dU}{ds} \right)^2.
 \end{equation*}
 \textbf{(C)} The datum $\left({\mathcal{R}}^{1, \mu}, {\Lambda}^{1, \mu}, {\Theta}^{1, \mu}  \right)$
 of ${\Sigma}^{1, \mu}$ coincides with the datum $\left({\mathcal{R}}, {\Lambda}, {\Theta} \right)$ of ${\Sigma}$.
\end{proof}

 We briefly sketch the geometric ingredients in our construction in Theorem \ref{moduli0}.
 For given a helicoidal ${\mathcal{K}}^{\frac{1}{4}}$-translator, we prove that there exists a sub-family chosen from the two-parameter family
 of Bour's isometric helicoidal surfaces, so that each member of this sub-family
 is a ${\mathcal{K}}^{\frac{1}{4}}$-translator and that one member is rotationally symmetric.

 Our one-parameter family of ${\mathcal{K}}^{\frac{1}{4}}$-translators admits the parametrizations by so called the Bour coordiate
 $(s,t)$ and the Bour function $U=U(s)$. The trick to obtain the explicit construction in \textbf{(C3)} is
  to perform the coordinate transformation $s \mapsto U$ to have the geometric coordinate $(U,t)$ on our one-parameter
 family of  ${\mathcal{K}}^{\frac{1}{4}}$-translators.

\begin{lemma}[\textbf{Existence of helicoidal ${\mathcal{K}}^{\frac{1}{4}}$-translators of pitch $h$}]
\label{hel1}
 Let $h$ be a given constant. Then, any non-cylindrical helicoidal ${\mathcal{K}}^{\frac{1}{4}}$-translator
 with pitch $h$ admits the parametrization
\begin{equation*}
    \left( U, t \right)
  \mapsto \left(\; {\mathcal{R}(U)}  \cos  \left(\, t - {\Theta}(U) \, \right), \; {\mathcal{R}(U)} \sin \left(
  \, t -  {\Theta}(U) \, \right), \; {\Lambda}(U)  + h \left( \, t - {\Theta}(U) \,  \right) \; \right),
\end{equation*}
 where the geometric datum $\left( {\mathcal{R}}(U), {\Lambda}(U), {\Theta}(U) \right)$
 can be obtained from the relation
\begin{equation} \label{data3}
\begin{cases}
    {\mathcal{R}(U)}^2 =    U^2 -  {h}^2 , \\
    {\left(  \; \frac{d  {\Lambda}}{dU}  \; \right)}^2
    = \frac{  U^2  }{ {\left(  U^2 -  {h}^2  \right)}^2 }
     \left[  U^4 + \left( c - 1 - {h}^2  \right) U^2  - h^2 c  \, \right], \\
    {\left(  \; \frac{d  {\Theta}  }{dU}   \; \right)}^2
    = \frac{  {h}^2  }{  U^2  {\left(  U^2 -  {h}^2  \right)}^2 }
    \left[  U^4 + \left( c - 1 - {h}^2  \right) U^2  - h^2 c \,
    \right],
\end{cases}
\end{equation}
 where $c  \in \mathbb{R}$ is a  constant.
\end{lemma}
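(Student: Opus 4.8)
The plan is to specialize the general machinery of Lemma~\ref{B00} to a helicoidal surface of pitch $h$ (that is, take $\mu=h$ there), recast the translator condition as a single scalar ODE in the Bour function $U=U(s)$, and integrate it. First I would record the two outputs that Lemma~\ref{B00}\textbf{(A)} provides for such a surface: in the Bour coordinate $(s,t)$ the metric reads $I=ds^2+U^2\,dt^2$, and the angle function satisfies $n_3{}^2=\bigl(\tfrac{dU}{ds}\bigr)^2$. The non-cylindrical hypothesis guarantees that $U$ is non-constant, so $\dot U=\tfrac{dU}{ds}$ does not vanish identically; this is what will let me divide by $\dot U$ later.

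Next I would compute the Gaussian curvature directly from $I=ds^2+U^2\,dt^2$. Since this is a metric of surface-of-revolution type $ds^2+U(s)^2\,dt^2$, the standard formula gives
\[
  {\mathcal K}=-\frac{1}{U}\,\frac{d^2U}{ds^2}.
\]
Combining this with $n_3{}^2=\dot U^{2}$, the defining translator equation ${\mathcal K}=\cos^4\theta={n_3}^{4}=\dot U^{4}$ becomes the autonomous second-order ODE $\ddot U=-\,U\,\dot U^{4}$, where $\dot{}=\tfrac{d}{ds}$. This reduction of the geometric PDE to one ODE in $U(s)$ is the conceptual heart of the lemma, and I expect the curvature computation plus this reduction to be where the real content lies.

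I would then integrate by the usual order reduction $p:=\dot U$, so that $\ddot U=p\,\tfrac{dp}{dU}$. Cancelling the factor $p$ (legitimate by non-cylindricity) yields the separable equation $\tfrac{dp}{dU}=-\,U\,p^{3}$, whose first integral is $p^{-2}=U^2+c$ for a real constant $c$. Hence
\[
  \left(\frac{dU}{ds}\right)^{2}=\frac{1}{\,U^2+c\,},
\]
which both introduces the parameter $c$ and shows that $n_3=\pm 1/\sqrt{U^2+c}$, matching the geometric role of $U$ and $c$ asserted in the main theorem.

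Finally I would feed this first integral back into the recovery relations of Lemma~\ref{B00}\textbf{(A2)} taken with $\mu=h$: the identity ${\mathcal R}^2=U^2-h^2$, the expression $d\Lambda^2=\tfrac{U^2}{(U^2-h^2)^2}\bigl(U^2(1-\dot U^2)-h^2\bigr)\,ds^2$, and $d\Theta=\tfrac{h}{U^2}\,d\Lambda$. Converting $s$-derivatives to $U$-derivatives through $\tfrac{d\Lambda}{dU}=\dot\Lambda/\dot U$ clears the factor $\dot U^{2}=1/(U^2+c)$; combining $U^2(1-\dot U^2)-h^2$ over the common denominator $U^2+c$ produces exactly the numerator $U^4+(c-1-h^2)U^2-h^2c$, giving the claimed formulas in \eqref{data3}, and the $\Theta$-relation follows at once from $d\Theta=\tfrac{h}{U^2}\,d\Lambda$. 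The only delicate point in this last step is bookkeeping the chain rule and the cancellation of $U^2+c$, so I anticipate no genuine obstacle here beyond careful algebra.
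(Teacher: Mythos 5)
Your proposal is correct and follows essentially the same route as the paper: specialize Lemma \ref{B00} with $\lambda=1$ and pitch $h$, reduce the translator condition to the ODE $-\frac{1}{U}\frac{d^2U}{ds^2}=\left(\frac{dU}{ds}\right)^4$ via ${\mathcal K}=-\frac{1}{U}\frac{d^2U}{ds^2}$ and ${n_3}^2=\left(\frac{dU}{ds}\right)^2$, obtain the first integral $ds^2=(U^2+c)\,dU^2$, and substitute into the recovery relations. The only cosmetic difference is that you integrate by the order reduction $p=\dot U$ while the paper writes the ODE as $\frac{d}{ds}\left(\frac{1}{(\dot U)^2}-U^2\right)=0$; these are the same first integral.
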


\begin{proof}
 Taking $\lambda=1$ in Lemma \ref{B00}, we construct a helicoidal
 surface $\Sigma$ with pitch $h$:
\begin{equation*}
    \left( s, t \right) \mapsto {\mathbf{X}}^{1, h} \left( s, t \right)
   = \left(\; {\mathcal{R}}  \cos  \left(\, t - {\Theta}  \, \right), \; {\mathcal{R} } \sin \left(
  \, t -  {\Theta}  \, \right), \; {\Lambda}   + h \left( \, t - {\Theta}  \,  \right) \; \right),
\end{equation*}
 where the geometric datum $\left( {\mathcal{R}}, {\Lambda}, {\Theta}  \right)=
 \left( {\mathcal{R}}(s), {\Lambda}(s), {\Theta}(s)  \right)$
 is given by the relation
\begin{equation} \label{data3a}
\begin{cases}
 { {{\mathcal{R}}}  }^2 =   U^2 -  {h}^2, \\
    {\left( d{\Lambda}  \right)}^2
    = \frac{   U ^2 }{\left(   U^2 -  {h}^2  \right)^2}
    \left(   U^2 \, \left( 1 -  \left(  \frac{ dU   }{ ds } \right)^2   \right) - h^2
    \right) \, ds^2, \\
    {d {\Theta} } = \frac{h}{  U^2  } d{\Lambda}.
\end{cases}
\end{equation}
 The key point is to take the Bour function $U$ as the new parameter on our helicoidal surface $\Sigma$.
  According to Lemma \ref{B00} again, we see that the induced metric on  $\Sigma$ reads
 $I_{\Sigma} = ds^2 + U^2 dt^{2}$, that its Gaussian curvature  $K$ is equal to
 $K=- \frac{1}{U} \frac{d^2 U}{ds^2}$, and that its angle function reads
 ${n_3}^2 =  \left(  \frac{dU}{ds} \right)^{2}$. Thus, the condition that the helicoidal surface $\Sigma$ becomes a ${\mathcal{K}}^{\frac{1}{4}}$-translator
 implies that $K =  {n_3}^4$, which means the ordinary differential equation
\begin{equation*}
 - \frac{1}{U} \frac{d^2 U}{ds^2} =  \left(  \frac{dU}{ds} \right)^4.
\end{equation*}
 In the case when $\frac{dU}{ds}$ vanishes locally, our surface $\Sigma$ becomes the cylinder over
 a circle in the $xy$-plane. When  $\frac{dU}{ds}$ does not vanish, we are able to make a coordinate
 transformation $s \mapsto U$ and can rewrite the above ODE as
 \begin{equation*}
  0 = \frac{d }{ds} \left( \frac{1}{{\left( \frac{dU}{ds}\right)}^2} - U^2 \right).
 \end{equation*}
  Hence its first integral is explicitly given by, for some constant $c \in \mathbb{R}$,
\begin{equation*}
 {ds^2} =  \left( U^{2} + c  \right)  dU^{2}.
\end{equation*}
 We now can employ this to perform the coordinate transformation $\left(s, t \right) \mapsto   \left( U, t \right)$
 on $\Sigma$. Rewriting (\ref{data3a}) in terms of the new variable $U$ gives indeed the relation in (\ref{data3}).
\end{proof}

 \begin{proof}[\textbf{Proof of  Theorem \ref{moduli0}}] We first prove \textbf{(B)}. Taking $h=0$ in Lemma
 \ref{hel1}, we see that any rotational ${\mathcal{K}}^{\frac{1}{4}}$-translators admits the patch
\begin{equation*} 
    \left( U, t \right)
  \mapsto \left(\; {\mathcal{R}(U)}  \cos  \left(\, t - {\Theta}(U) \, \right), \; {\mathcal{R}(U)} \sin \left(
  \, t -  {\Theta}(U) \, \right), \; {\Lambda}(U)  + h \left( \, t - {\Theta}(U) \,  \right) \; \right),
\end{equation*}
 where the geometric datum $\left( {\mathcal{R}}(U), {\Lambda}(U), {\Theta}(U) \right)$
 satisfies the relation
\begin{equation*} \label{data4}
      {\left(  {\mathcal{R}}(U)  \right)}^2   =   U^2 , \quad
      {\left( \frac{d  {\Lambda}}{dU}  \right)}^2   =   U^2 + \left( c - 1  \right),  \quad
       {\left(  \frac{d  {\Theta}  }{dU}  \right)}^2 =  0
\end{equation*}
 for some constant $c \in \mathbb{R}$. The condition that the helicoidal surface $\Sigma$ becomes
 a ${\mathcal{K}}^{\frac{1}{4}}$-translator implies the ordinary differential equation
 \begin{equation*}
 - \frac{1}{U} \frac{d^2 U}{ds^2} =  \left( \frac{dU}{ds} \right)^{4}.
 \end{equation*}

 When $\frac{dU}{ds}$ vanishes locally, our surface $\Sigma$ becomes the cylinder over
 a circle in the $xy$-plane. In the case when $\frac{dU}{ds}$ does not vanish, we can introduce a coordinate
 transformation $s \mapsto U$.  Since $ \frac{d  {\Theta}  }{dU}$ vanishes,
without loss of generality, after a translation of the coordinate
$t$, we may take ${\Theta} = 0$ in the above patch as follows
\begin{equation*}
    \left( U, t \right)
  \mapsto \left(\; U   \cos  t, \;  U \sin t, \; {\Lambda}(U)   \;
  \right).
\end{equation*}
 As in the proof of Lemma \ref{hel1}, ${\Lambda}(U)$ solves the ordinary differential equation
 \begin{equation*}
 \frac{d {\Lambda}}{dU}   = \pm \sqrt{U^2 + \left( c - 1 \right)}.
 \end{equation*}
 Considering the sign of the constant $c-1$,  we meet the explicit solution ${\Lambda}_{c}(U)={\Lambda}(U)$ (up to the sign) as follows.
  \begin{equation*} \label{rot}
  {\Lambda}(U) =
\begin{cases}
   \frac{1}{2} \left[ \, U \sqrt{U^2 + {\kappa}^{2}\,}  + {\kappa}^{2} \, \mathrm{arcsinh} \,
 \left( \frac{U}{{\kappa}}  \right)  \; \right]  \quad \quad (\text{when}\; c=1 + {\kappa}^{2}, \; \kappa>0),  \\
 \frac{1}{2} \; U^2 \quad \quad \quad \quad  \quad \quad  \quad \quad \quad \quad  \quad \quad \quad \quad
   \quad   (\text{when}\; c=1),  \\
   \frac{1}{2} \left[ \,
 U \sqrt{U^2 - {\kappa}^{2}\,} -  {\kappa}^{2} \, \mathrm{arccosh} \,  \left( \frac{U}{{\kappa}}  \right)
  \; \right]   \quad \quad (\text{when}\; c=1 - {\kappa}^{2}, \; \kappa>0).
\end{cases}
\end{equation*}

  We next prove \textbf{(A)}. Using Lemma \ref{B00}, we see that, for a given
  helicoidal ${\mathcal{K}}^{\frac{1}{4}}$-translator
 $\Sigma$, we are able to introduce the Bour coordinate $(s,t)$ and the Bour function
 $U(s)$ on the surface $\Sigma$ so that $I_{\Sigma} = ds^2 + {U(s)}^2 dt^2$. The condition that
 $\Sigma$ is a ${\mathcal{K}}^{\frac{1}{4}}$-translator says
\begin{equation} \label{angle}
 - \frac{1}{U} \frac{d^2 U}{ds^2} =  \left(  \frac{dU}{ds} \right)^4,
\end{equation}
 just as we saw in the proof of Lemma \ref{hel1}. Next, by Lemma \ref{B00} again, we can associate a one-parameter family of isometric helicoidal surfaces
 ${\Sigma}^{h}$ satisfying that $I_{{\Sigma}^{h}} =I_{\Sigma} $, that ${\Sigma} =
 {\Sigma}^{\mu}$, and that the angle function on ${\Sigma}^{h}$ coincide with the one on ${\Sigma}$.
 Hence, as we saw in the proof of Lemma \ref{hel1}, the above ordinary
 differential equation in (\ref{angle}) guarantees that any
 helicoidal surface ${\Sigma}^{h}$ becomes indeed a ${\mathcal{K}}^{\frac{1}{4}}$-translator.

 It now remains to show \textbf{(C)}. The statement \textbf{(C1)} is obvious by the construction in Lemma \ref{hel1}.
 Next, the equality ${ds^2} =  \left( U^{2} + c  \right) dU^{2}$ proved in Lemma \ref{hel1} implies that the
 induced metric of the helicoidal surface constructed in Lemma \ref{hel1} reads
 \begin{equation*}
 ds^2 + U^2 dt^2 = \left( U^{2} + c  \right)  dU^2 +  U^2  dt^{2},
 \end{equation*}
 (which implies \textbf{(C2)} and \textbf{(C3)}), and that the angle function is given by, up to a sign,
  \begin{equation*}
  \frac{dU}{ds} = \frac{1}{ {  \frac{ds}{dU} } } =  \frac{1}{\sqrt{U^2 +c}},
 \end{equation*}
 which is \textbf{(C4)}.
 This complete the proof of our description of the moduli space of helicoidal ${\mathcal{K}}^{\frac{1}{4}}$-translators
 in Theorem \ref{moduli0}.
\end{proof}

\end{document}